\def\namedlabel#1#2{\begingroup
    #2%
    \def\@currentlabel{#2}%
    \phantomsection\label{#1}\endgroup}
\newtheorem{corollary}{Corollary}[section]
\newtheorem{Theorem}{Theorem}[section]
\newtheorem{example}{Example}[section]
\newtheorem{proposition}{Proposition}[section]
\definecolor{darkred}{rgb}{1, 0.1, 0.3}
\definecolor{darkblue}{rgb}{0.1, 0.1, 1}
\definecolor{darkgreen}{rgb}{0,0.6,0.5}
\newcommand{\R}{\mathbb{R}}
\renewcommand{\H}{\mathcal{H}}
\begin{document}

\title{Proximal determination of convex functions}
 
\author{Emilio Vilches\footnote{Instituto de Ciencias de la Ingenier\'ia, Universidad de O'Higgins, email: emilio.vilches@uoh.cl}}


\maketitle

\begin{abstract}
We provide comparison principles for convex functions through its proximal mappings. Consequently, we prove that the norm of the proximal operator determines a convex the function up to a constant. A new characterization of Lipschitzianity in terms of the proximal operator is given.
\end{abstract}

\section{Introduction}

Let $\mathcal{H}$ be a real Hilbert space with inner product $\langle\cdot,\cdot\rangle$ and norm $\Vert \cdot \Vert$. By determination of a convex function $f\colon \H \to \mathbb{R}\cup \{+\infty\}$, we mean a result of type ``if $f$ satisfies a given condition, then $f$ is uniquely determined up to constant.''  The first determination result was proved by J.J. Moreau in Hilbert spaces (see \cite[p.287]{Moreau1965}):
\begin{Theorem}[Moreau]\label{Moreau}
If $f,g\colon \Gamma_0(\H)$ are two functions such that 
$$
\operatorname{prox}_f(x)=\operatorname{prox}_g(x) \quad \textrm{ for all } x\in \H,
$$
then $f$ and $g$ differ by a constant.
\end{Theorem}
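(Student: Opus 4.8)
The plan is to reduce the statement about proximal mappings to one about the Moreau envelopes of $f$ and $g$, which are everywhere-differentiable convex functions, and then to invert the envelope by conjugacy. Recall that the Moreau envelope $M_f(x)=\inf_{y\in\H}\{f(y)+\tfrac12\|y-x\|^2\}$ of a function $f\in\Gamma_0(\H)$ is finite and convex on all of $\H$, is everywhere Fréchet differentiable, and satisfies the classical identity $\nabla M_f(x)=x-\operatorname{prox}_f(x)$. First I would record this identity (and its analogue for $g$), together with the fact that $M_f$ is, up to the self-conjugate quadratic $q:=\tfrac12\|\cdot\|^2$, the infimal convolution $M_f=f\,\square\,q$.

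The second step is the heart of the argument. From the hypothesis $\operatorname{prox}_f=\operatorname{prox}_g$ on $\H$ I get $\nabla M_f(x)=x-\operatorname{prox}_f(x)=x-\operatorname{prox}_g(x)=\nabla M_g(x)$ for every $x\in\H$. Since $M_f$ and $M_g$ are real-valued and Gâteaux differentiable on the whole space, their difference has vanishing derivative along every segment $t\mapsto x+t(y-x)$, so the fundamental theorem of calculus for the scalar map $t\mapsto (M_f-M_g)(x+t(y-x))$ forces $M_f-M_g$ to be constant: $M_f=M_g+C$ for some $C\in\R$.

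It remains to recover $f$ from $M_f$. Using $(f\square q)^*=f^*+q^*=f^*+q$ (which holds with no qualification, since the conjugate of an infimal convolution is always the sum of the conjugates) gives $M_f^*=f^*+q$, i.e. $f^*=M_f^*-q$; the same holds for $g$. Now $M_f=M_g+C$ yields $M_f^*=M_g^*-C$, hence $f^*=g^*-C$, and taking conjugates once more, using $f=f^{**}$ and $g=g^{**}$ for functions in $\Gamma_0(\H)$, I obtain $f=g+C$, as desired.

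The main obstacle is not any single estimate but making sure the envelope is genuinely invertible on $\Gamma_0(\H)$: the gradient identity of the first step is where the proximal data actually enters, and the conjugacy computation of the last step is what guarantees that distinct $\Gamma_0$ functions cannot share a Moreau envelope up to a constant. A purely primal alternative — writing $M_f(x)-M_g(x)=f(\operatorname{prox}_f(x))-g(\operatorname{prox}_g(x))=C$ and extending the resulting equality $f=g+C$ from $\operatorname{range}(\operatorname{prox}_f)=\operatorname{dom}(\partial f)$ to all of $\H$ — is also possible, but it requires a density-and-lower-semicontinuity argument that the conjugacy route sidesteps entirely.
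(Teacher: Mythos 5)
Your proof is correct. Note first that the paper states Theorem \ref{Moreau} without proof, quoting it from \cite{Moreau1965}, so there is no in-paper argument to match line by line; your route is essentially Moreau's classical one, and every ingredient you use is already recorded in the paper's preliminaries: the gradient identity $\nabla f_{1}(x)=x-\operatorname{prox}_f(x)$ is \eqref{derivada} with $\lambda=1$, and the conjugacy formula $(f_{1})^{\ast}=f^{\ast}+\tfrac12\Vert\cdot\Vert^2$ is \eqref{conjugada-Moreau}, which is exactly your $(f\,\square\,q)^{\ast}=f^{\ast}+q$. It is worth observing that your argument runs parallel to the paper's proof of its own comparison principle, Theorem \ref{teo-comp}: there too one passes to the Moreau envelope (of a tilted conjugate), compares gradients, and returns by conjugation. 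The difference is that the paper handles an \emph{inequality} between norms $\Vert\operatorname{prox}_f(x)-x_0\Vert\leq\Vert\operatorname{prox}_g(x)-x_0\Vert$, which forces it to invoke the nontrivial gradient-norm comparison principle (Proposition \ref{Gradiente}) together with bounded-below hypotheses, whereas your equality hypothesis lets you conclude that $M_f-M_g$ has identically vanishing derivative, and constancy follows from the mean value theorem alone --- indeed, the mean value theorem is the cleaner citation here, since the fundamental theorem of calculus nominally asks for integrability of the derivative (harmless, as it vanishes identically, but unnecessary). Two small points of care that you handled correctly: the identity $(f\,\square\,q)^{\ast}=f^{\ast}+q^{\ast}$ does hold for arbitrary proper functions with no qualification, and the final step $f^{\ast}=g^{\ast}-C\Rightarrow f=g+C$ legitimately uses $f=f^{\ast\ast}$, $g=g^{\ast\ast}$, which is where $f,g\in\Gamma_0(\H)$ enters. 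Your closing remark is also accurate: the purely primal route via $f(\operatorname{prox}_f(x))-g(\operatorname{prox}_f(x))=C$ only gives equality on $\operatorname{range}(\operatorname{prox}_f)=\operatorname{dom}(\partial f)$, and extending it to all of $\H$ would require a density argument (e.g.\ Br{\o}ndsted--Rockafellar plus lower semicontinuity) that the conjugacy route avoids.
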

\noindent Moreau used the latter result to prove that the subgradients uniquely determine a convex function, which is known as an integration result.  Since then, several integration results appeared for convex and nonconvex functions (see, e.g., \cite{Thibault1995,Thibault2005,Thibault2010}). In this paper, by using a recent result on the determination of convex functions \cite{UOH2020}, we provide a new determination result by showing that the norm of the proximal operator  determines a convex the function up to a constant (Proposition \ref{main} and Theorem \ref{teo2}). For this, we establish comparison principles for convex functions through its proximal mapping (Theorem \ref{teo-comp}), which is also used to obtain  a new characterization of Lipschitzianity (Proposition \ref{prop-Lipschitz}).

The paper is organized as follows. After some preliminaries, in Section \ref{Comparison}, we present comparison principles for convex functions in terms of its proximal operators and a new  characterization of Lipschitz convex functions through proximal operators. These principles are the basis of the developments of Section \ref{Determination}, where it is shown that the norm of the proximal operator determines a convex function completely, up to a constant. 

\section{Preliminaries}

Let $\mathcal{H}$ be a real Hilbert space endowed with an inner product $\langle\cdot,\cdot\rangle$ and associated norm $\Vert \cdot \Vert$.  We denote by $\Gamma_0(\mathcal{H})$ the set of all proper, convex and lower semicontinuous functions from $\mathcal{H}$ with values in $\R\cup \{+\infty\}$. For $f\in \Gamma_0(\mathcal{H})$, its Legendre-Fenchel conjugate function $f^*:\mathcal{H}\to \R\cup \{+\infty\}$ is given by 
\[
{f^*(x^*) = \sup_{v\in \mathcal{H}}\{ \langle x^*,v\rangle - f(v)   \}.}
\]
It is known that $f^*\in\Gamma_0(\mathcal{H})$ and that for every $(x,x^*)\in \mathcal{H}\times\mathcal{H}$, the Legendre-Fenchel inequality holds, that is
\[
 f(x) + f^*(x^*) \geq \langle x^*,x\rangle.
\]  
For a closed set $C$, we denote by $\delta_{C}$ de indicator function of $C$, that is, $\delta_C(x)=0$ if $x\in C$ and $\delta_{C}(x)=+\infty$ if $x\neq C$. It is clear that $\delta_C\in \Gamma_0(\H)$ if and only if  $C$ is closed  and convex. Moreover, $(\delta_C)^{\ast}=\sigma_C$, where $\sigma_C$ is the support function of $C$ defined by $\sigma_C(x)=\sup_{y\in C}\langle y,x\rangle$. \newline
\noindent  For $\lambda>0$, the Moreau envelope of $f$ of index $\lambda$ is the function $f_{\lambda}:\mathcal{H}\to\R$ given by
\[
{f_{\lambda}(x):=\inf_{y\in\mathcal{H}} \left\{  f(y) + \frac{1}{2\lambda}\|x-y\|^2 \right\}}.
\]
{The above infimum} is attained at a unique point, $\operatorname{prox}_{\lambda f}(x)$. The mapping $\operatorname{prox}_{\lambda f}:\mathcal{H}\to\mathcal{H}$ is non-expansive and for $\lambda=1$ it is called the proximal operator, that is, 
$$
\operatorname{prox}_f(x)=\operatorname{argmin}_{y\in \H} \left\{  f(y) + \frac{1}{2}\|x-y\|^2 \right\}.
$$
It is known that $f_{\lambda}$ is convex, continuously differentiable on $\H$, and its derivative is given by
\begin{equation}\label{derivada}
\nabla f_{\lambda}(x) = \frac{1}{\lambda} (x-\operatorname{prox}_{\lambda f}(x)) \textrm{ for all } x\in \H
\end{equation}
Moreover, 
\begin{equation}\label{conjugada-Moreau}
(f_{\lambda})^{\ast}(x)=f^{\ast}(x)+\frac{\lambda}{2}\Vert x\Vert^2 \textrm{ for all } x\in \H.
\end{equation}
We refer to  \cite{attouch2014variational,BC2017} for more details of Moreau envelope and its applications. \newline

\noindent To obtain our results, we need the Moreau decomposition (see  \cite[p. 280]{Moreau1965}).
\begin{proposition}[Moreau decomposition]\label{Moreau-deco} If $f\in \Gamma_0(\H)$, then
$$
\operatorname{prox}_{ f}(x)+ \operatorname{prox}_{f^{\ast}}(x)=x \quad \textrm{ for all } x\in \H.
$$
\end{proposition}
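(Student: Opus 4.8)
The plan is to reduce the identity to the first-order optimality condition satisfied by each proximal point, combined with the subdifferential inversion rule for conjugates. Write $p := \operatorname{prox}_f(x)$. Since $p$ is the unique minimizer of $y \mapsto f(y) + \tfrac{1}{2}\|x-y\|^2$, Fermat's rule gives $0 \in \partial f(p) + (p-x)$, that is, $x - p \in \partial f(p)$. The very same computation applied to $f^{\ast}$ shows that a point $q$ equals $\operatorname{prox}_{f^{\ast}}(x)$ if and only if $x - q \in \partial f^{\ast}(q)$. Thus the whole statement is encoded in these two variational characterizations, and it remains to pass from one to the other.

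Next I would invoke the conjugate subgradient inversion: for $f \in \Gamma_0(\H)$ one has $x^{\ast} \in \partial f(u) \iff u \in \partial f^{\ast}(x^{\ast})$, which follows from the biconjugation identity $f^{\ast\ast}=f$ together with the equality case of the Fenchel--Young inequality. Applying this to $x - p \in \partial f(p)$ yields $p \in \partial f^{\ast}(x-p)$. Setting $q := x - p$, this reads $x - q \in \partial f^{\ast}(q)$, which is precisely the optimality condition characterizing $\operatorname{prox}_{f^{\ast}}(x)$. Hence $q = \operatorname{prox}_{f^{\ast}}(x)$, i.e. $\operatorname{prox}_{f^{\ast}}(x) = x - \operatorname{prox}_f(x)$, which is the claimed decomposition.

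Alternatively, one could argue purely at the level of the Moreau envelope using only the data already recorded in the excerpt. By \eqref{conjugada-Moreau} with $\lambda = 1$ the conjugate of $f_1$ is $f^{\ast} + \tfrac12\|\cdot\|^2$, while \eqref{derivada} gives $\nabla f_1(x) = x - \operatorname{prox}_f(x)$. Since $f_1$ is convex and differentiable, the Fenchel--Young equality forces $x \in \partial (f_1)^{\ast}(\nabla f_1(x))$, and computing $\partial (f_1)^{\ast} = \partial f^{\ast} + \mathrm{id}$ recovers exactly the membership $p \in \partial f^{\ast}(x-p)$, from which the decomposition follows as before.

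The only genuinely nontrivial ingredient is the subdifferential inversion rule, which is where the hypothesis $f \in \Gamma_0(\H)$ (properness, convexity and lower semicontinuity) actually enters, through the identity $f^{\ast\ast}=f$; without lower semicontinuity the equivalence $x^{\ast} \in \partial f(u) \iff u \in \partial f^{\ast}(x^{\ast})$ can fail. Everything else is the routine dictionary between minimizers, the variational characterization of the proximal point, and Fermat's rule.
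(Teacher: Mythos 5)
Your proof is correct. Note, however, that the paper does not prove this proposition at all: it states the Moreau decomposition as a known result and cites Moreau's original 1965 article for it, so there is no in-paper argument to compare against. What you have written is the standard textbook proof (essentially the one in Bauschke--Combettes): the variational characterization $p=\operatorname{prox}_f(x)\iff x-p\in\partial f(p)$, obtained from Fermat's rule and the exact sum rule (valid here since the quadratic term is everywhere continuous), combined with the conjugate subgradient inversion $x^{\ast}\in\partial f(u)\iff u\in\partial f^{\ast}(x^{\ast})$, which is exactly where $f\in\Gamma_0(\H)$ enters via $f^{\ast\ast}=f$ --- you correctly isolate this as the crucial hypothesis. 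Your alternative route through the Moreau envelope, using \eqref{derivada} and \eqref{conjugada-Moreau}, is also sound, though it is not genuinely different: after computing $\partial(f_1)^{\ast}=\partial f^{\ast}+\mathrm{id}$ it lands on the same membership $p\in\partial f^{\ast}(x-p)$, so it repackages rather than replaces the inversion argument. Both versions are complete and would serve as a self-contained proof of the proposition.
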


We end this section with a comparison principle for convex functions through its gradients (see \cite[Theorem~3.1]{UOH2020}).  This principle is the basis for the determination of convex functions through the norm of (sub)gradients. We refer to \cite{UOH2020} for further results in this direction.
\begin{proposition}\label{Gradiente}
Let $f, g\in \Gamma_0(\H)$ be two G\^{a}teaux differentiable convex functions bounded from below such that 
$$
\Vert \nabla f(x)\Vert\leq \Vert \nabla g(x)\Vert \quad \textrm{ for all } x\in \H.
$$
Then,   $f-\inf f\leq  g-\inf g$.
\end{proposition}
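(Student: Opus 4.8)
The plan is to subtract constants and reduce the statement to the inequality $f\leq g$, and then to compare $f$ and $g$ by integrating their gradients along the curve of steepest descent of $g$; the point of this particular curve is that Cauchy--Schwarz converts the pointwise norm bound $\Vert\nabla f\Vert\leq\Vert\nabla g\Vert$ into a usable \emph{directional} estimate, which a straight-line integration would not provide. Since $\Vert\nabla(f-c)\Vert=\Vert\nabla f\Vert$, I may assume $\inf f=\inf g=0$, so that the claim becomes $f(x)\leq g(x)$ for every $x\in\H$. I would fix $x_0\in\H$ and let $x(\cdot)$ solve the subgradient flow $\dot x(t)=-\nabla g(x(t))$ with $x(0)=x_0$; this is well defined by the theory of evolution equations governed by maximal monotone operators, since $\partial g=\{\nabla g\}$ has full domain, and along it $t\mapsto g(x(t))$ is absolutely continuous with $\frac{d}{dt}g(x(t))=-\Vert\nabla g(x(t))\Vert^2$ for a.e. $t$.

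The first substantive step is a monotonicity observation. Along the flow,
$$
\frac{d}{dt}(f-g)(x(t))=\big\langle \nabla f(x(t))-\nabla g(x(t)),\,-\nabla g(x(t))\big\rangle=\Vert\nabla g(x(t))\Vert^2-\langle\nabla f(x(t)),\nabla g(x(t))\rangle,
$$
and by Cauchy--Schwarz together with the hypothesis, $\langle\nabla f,\nabla g\rangle\leq\Vert\nabla f\Vert\,\Vert\nabla g\Vert\leq\Vert\nabla g\Vert^2$. Hence $(f-g)(x(t))$ is non-decreasing in $t$, so $(f-g)(x_0)\leq\lim_{t\to\infty}(f-g)(x(t))$, and everything reduces to showing this limit is $\leq 0$.

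The crux is the asymptotic analysis. I would use the standard facts that $g(x(t))\downarrow\inf g=0$, that the speed $\Vert\nabla g(x(t))\Vert$ is non-increasing, and that $\int_0^\infty\Vert\nabla g(x(t))\Vert^2\,dt=g(x_0)<\infty$; a non-increasing integrable integrand forces $t\,\Vert\nabla g(x(t))\Vert^2\to 0$. For any fixed $z\in\H$, convexity of $f$ gives $f(x(t))-f(z)\leq\langle\nabla f(x(t)),x(t)-z\rangle\leq\Vert\nabla g(x(t))\Vert\,\Vert x(t)-z\Vert$, while convexity of $g$ controls the trajectory through $\frac{d}{dt}\Vert x(t)-z\Vert^2\leq 2(g(z)-g(x(t)))\leq 2g(z)$, whence $\Vert x(t)-z\Vert=O(\sqrt t)$. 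Multiplying, $\Vert\nabla g(x(t))\Vert\,\Vert x(t)-z\Vert=o(t^{-1/2})\cdot O(t^{1/2})\to 0$, so $\limsup_{t\to\infty}f(x(t))\leq f(z)$; taking the infimum over $z$ gives $\limsup_t f(x(t))\leq\inf f=0$, and $f\geq 0$ forces $f(x(t))\to 0$. Therefore $(f-g)(x(t))\to 0$, and by the monotonicity $(f-g)(x_0)\leq 0$, i.e. $f(x_0)\leq g(x_0)$; as $x_0$ was arbitrary this proves the claim.

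I expect the main obstacle to be precisely this behaviour at infinity when neither infimum is attained and the steepest-descent trajectory escapes to infinity: the whole argument hinges on balancing the decay rate $\Vert\nabla g(x(t))\Vert=o(t^{-1/2})$ against the at-most-$\sqrt t$ growth of $\Vert x(t)-z\Vert$. Secondary technical care is needed to justify the chain rule $\frac{d}{dt}h(x(t))=\langle\nabla h(x(t)),\dot x(t)\rangle$ a.e. for $h\in\{f,g\}$ along the absolutely continuous flow and the non-increase of the speed, all of which are standard in the maximal-monotone-operator framework. A discrete variant, more in keeping with the proximal theme of this paper, would replace the continuous flow by the proximal-point iteration $x_{n+1}=\operatorname{prox}_{\lambda g}(x_n)$ and carry out the same monotonicity-plus-decay bookkeeping.
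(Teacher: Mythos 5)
Your proof is correct, but note that there is nothing in the paper to compare it against: the paper does not prove Proposition \ref{Gradiente} at all, it imports it verbatim from \cite[Theorem~3.1]{UOH2020} and uses it as a black box in the proof of Theorem \ref{teo-comp}. What you have done is supply a self-contained proof of that imported result, and the argument checks out at every step. The reduction to $\inf f=\inf g=0$ is harmless since gradients are unaffected by additive constants; the flow $\dot x(t)=-\nabla g(x(t))$ exists as a strong, locally Lipschitz solution because G\^{a}teaux differentiability everywhere forces $\operatorname{dom}\partial g=\H$ (Brezis's theory, which the paper already cites as \cite{Brezis1973}); and since $f,g$ are finite everywhere, lsc and convex, they are continuous, hence locally Lipschitz, which legitimizes the a.e.\ chain rule you invoke for $t\mapsto (f-g)(x(t))$ (the two one-sided subgradient inequalities pinch the derivative to $\langle \nabla h(x(t)),\dot x(t)\rangle$ wherever both derivatives exist). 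The heart of your proof, the asymptotic balance, is handled correctly: $\int_0^\infty\Vert\nabla g(x(t))\Vert^2\,dt=g(x_0)-\inf g<\infty$ together with the non-increase of the semigroup speed $\Vert\dot x(t^+)\Vert$ gives $t\Vert\nabla g(x(t))\Vert^2\to 0$ (via $t\phi(t)\le 2\int_{t/2}^t\phi$), while $\tfrac{d}{dt}\Vert x(t)-z\Vert^2\le 2(g(z)-g(x(t)))\le 2g(z)$ yields $\Vert x(t)-z\Vert\le\bigl(\Vert x_0-z\Vert^2+2g(z)t\bigr)^{1/2}$, and the product $o(t^{-1/2})\cdot O(t^{1/2})\to 0$ then kills $f(x(t))-f(z)$ in the limit even when neither infimum is attained and the trajectory is unbounded --- which is precisely the delicate case. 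Your route is, in spirit, the same steepest-descent comparison used in the cited reference \cite{UOH2020}, so you have effectively reconstructed the proof the paper outsources; your suggested proximal-point discretization $x_{n+1}=\operatorname{prox}_{\lambda g}(x_n)$ would be a reasonable alternative more aligned with this paper's proximal theme, trading evolution-equation machinery for the same monotonicity-plus-decay estimates in discrete time.
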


\section{Comparison principles}\label{Comparison}    

The following result is a comparison principle for convex functions.
\begin{Theorem}\label{teo-comp}
Let $f,g\colon \Gamma_0(\H)$ be two functions such for some $x_0\in \operatorname{dom}f\cap \operatorname{dom}g$ and 
$$
\Vert \operatorname{prox}_f(x)-x_0\Vert\leq \Vert \operatorname{prox}_g(x)-x_0\Vert \quad \textrm{ for all } x\in \H.
$$
Then, $g-g(x_0)\leq f-f(x_0)$.
\end{Theorem}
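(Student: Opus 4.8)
The plan is to reduce the statement to the gradient comparison principle of Proposition~\ref{Gradiente}. The key observation is that the proximal operator is itself a gradient: applying formula~\eqref{derivada} to $f^{\ast}$ with $\lambda=1$ gives $\nabla (f^{\ast})_1(x)=x-\operatorname{prox}_{f^{\ast}}(x)$, and the Moreau decomposition (Proposition~\ref{Moreau-deco}) turns the right-hand side into $\operatorname{prox}_f(x)$. Hence $\operatorname{prox}_f(x)=\nabla (f^{\ast})_1(x)$, where $(f^{\ast})_1$ is the (finite-valued, continuously differentiable) Moreau envelope of index $1$ of the conjugate $f^{\ast}$. This identity is what converts the hypothesis on proximal norms into a hypothesis on gradient norms.

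Concretely, I would set $\phi_f(x):=(f^{\ast})_1(x)-\langle x_0,x\rangle$ and $\phi_g(x):=(g^{\ast})_1(x)-\langle x_0,x\rangle$. Both are finite-valued, convex and continuously differentiable, with $\nabla\phi_f(x)=\operatorname{prox}_f(x)-x_0$ and $\nabla\phi_g(x)=\operatorname{prox}_g(x)-x_0$, so that the assumption reads exactly $\Vert\nabla\phi_f(x)\Vert\leq\Vert\nabla\phi_g(x)\Vert$ for all $x\in\H$. To be allowed to invoke Proposition~\ref{Gradiente} I must verify that $\phi_f$ and $\phi_g$ are bounded from below, and for the bookkeeping I also want their exact infima. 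Using $\inf_x\{h(x)-\langle x_0,x\rangle\}=-h^{\ast}(x_0)$ together with the conjugate formula~\eqref{conjugada-Moreau} applied to $f^{\ast}$, which gives $((f^{\ast})_1)^{\ast}(x_0)=f(x_0)+\tfrac12\Vert x_0\Vert^2$, I obtain $\inf\phi_f=-f(x_0)-\tfrac12\Vert x_0\Vert^2$, and this is finite precisely because $x_0\in\operatorname{dom}f$; symmetrically $\inf\phi_g=-g(x_0)-\tfrac12\Vert x_0\Vert^2$. This is exactly where the hypothesis $x_0\in\operatorname{dom}f\cap\operatorname{dom}g$ enters.

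Proposition~\ref{Gradiente} (with $\phi_f$ in the role of $f$ and $\phi_g$ in the role of $g$) then yields $\phi_f-\inf\phi_f\leq\phi_g-\inf\phi_g$. Substituting the infima computed above, the linear terms $-\langle x_0,x\rangle$ and the quadratic constants $\tfrac12\Vert x_0\Vert^2$ cancel, leaving the pointwise inequality $(f^{\ast})_1\leq (g^{\ast})_1+\bigl(g(x_0)-f(x_0)\bigr)$ on all of $\H$.

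The final step, which I expect to be the main obstacle to get exactly right, is to transport this inequality between Moreau envelopes back to an inequality between $f$ and $g$. Writing $c:=g(x_0)-f(x_0)$ and taking Legendre--Fenchel conjugates, the order-reversing property of conjugation together with $(h+c)^{\ast}=h^{\ast}-c$ gives $((f^{\ast})_1)^{\ast}\geq ((g^{\ast})_1)^{\ast}-c$. Invoking~\eqref{conjugada-Moreau} once more to identify $((f^{\ast})_1)^{\ast}=f+\tfrac12\Vert\cdot\Vert^2$ and $((g^{\ast})_1)^{\ast}=g+\tfrac12\Vert\cdot\Vert^2$, the quadratic terms cancel and I am left with $f\geq g-c$, that is, $g-g(x_0)\leq f-f(x_0)$, which is the claim.
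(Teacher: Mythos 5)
Your proof is correct and follows essentially the same route as the paper: reduce to Proposition~\ref{Gradiente} by realizing $\operatorname{prox}_f$ as the gradient of a Moreau envelope built from $f^{\ast}$ (via \eqref{derivada} and Proposition~\ref{Moreau-deco}), use $x_0\in\operatorname{dom}f\cap\operatorname{dom}g$ for the lower bounds, and conjugate back with \eqref{conjugada-Moreau}. The only cosmetic difference is that you subtract the tilt $\langle x_0,\cdot\rangle$ outside the envelope while the paper puts it inside (the paper's $\tilde f$ equals your $\phi_f$ up to a translation by $x_0$ and the additive constant $\tfrac12\Vert x_0\Vert^2$), so your hypothesis transfers without the argument shift $x\mapsto x+x_0$; the key lemmas and bookkeeping are otherwise identical.
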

\begin{proof} By virtue of Legendre-Fenchel inequality, for all $x, u\in \H$
\begin{equation*}
f^{\ast}(x)+f(u)\geq \langle x,u\rangle \textrm{ and } g^{\ast}(x)+g(u)\geq \langle x,u\rangle.
\end{equation*}
Thus, if $x_0\in \operatorname{dom}f\cap \operatorname{dom}g$, then 
\begin{equation*}
f^{\ast}(x)-\langle x,x_0\rangle \geq -f(x_0) \textrm{ and } g^{\ast}(x)-\langle x,x_0\rangle\geq -g(x_0),
\end{equation*}
which implies that the maps $x\mapsto f^{\ast}(x)-\langle x_0,x \rangle$ and $x\mapsto g^{\ast}(x)-\langle x_0,x \rangle$ are bounded from below. \newline
\noindent For $\lambda=1$,  let us consider 
$$
\tilde{f}=\left(f^{\ast}-\langle x_0,\cdot\rangle \right)_{\lambda} \textrm{ and } \tilde{g}=\left(g^{\ast}-\langle x_0,\cdot\rangle \right)_{\lambda}.
$$
Then, $\tilde{f}$ and $\tilde{g}$ are $C^{1,1}$  and bounded from below functions with 
$$
\nabla \tilde{f}(x)=x-\operatorname{prox}_{f^{\ast}-\langle x_0,\cdot\rangle}(x) \textrm{ and } \nabla \tilde{g}(x)=x-\operatorname{prox}_{g^{\ast}-\langle x_0,\cdot\rangle}(x).
$$
Moreover, according to Moreau's decomposition and properties of the proximal operator, for all $x\in \H$
\begin{equation*}
\begin{aligned}
\nabla \tilde{f}(x)&=x-\operatorname{prox}_{f^{\ast}-\langle x_0,\cdot\rangle}(x)=\operatorname{prox}_{(f^{\ast}-\langle x_0,\cdot\rangle)^{\ast}}(x)=\operatorname{prox}_{f(\cdot+x_0)}(x)=\operatorname{prox}_f(x+x_0)-x_0,\\
\nabla \tilde{g}(x)&=x-\operatorname{prox}_{g^{\ast}-\langle x_0,\cdot\rangle}(x)=\operatorname{prox}_{(g^{\ast}-\langle x_0,\cdot\rangle)^{\ast}}(x)=\operatorname{prox}_{g(\cdot+x_0)}(x)=\operatorname{prox}_g(x+x_0)-x_0.
\end{aligned}
\end{equation*}
Therefore, for all $x\in \H$
$$
\Vert \nabla \tilde{f}(x)\Vert \leq \Vert \nabla \tilde{g}(x)\Vert.
$$
Hence, by virtue of Proposition \ref{Gradiente}, $$\tilde{f}\leq \tilde{g}+ \inf \tilde{f}-\inf \tilde{g}=\tilde{g}-f(x_0)+g(x_0),$$
where we have used that $$ \inf \tilde{f}=\inf (f^{\ast}-\langle x_0,\cdot\rangle)=-f^{\ast\ast}(x_0)=-f(x_0) \textrm{ and }  \inf \tilde{g}=\inf (g^{\ast}-\langle x_0,\cdot\rangle)=-g^{\ast\ast}(x_0)=-g(x_0).$$ Then, by conjugation, we obtain that 
$$
(\tilde{g})^{\ast} \leq (\tilde{f})^{\ast}-f(x_0)+g(x_0).
$$
Then,  due to \eqref{conjugada-Moreau},
\begin{equation*}
\begin{aligned}
(\tilde{f})^{\ast}(x)&=(f^{\ast}-\langle x_0,\cdot\rangle)^{\ast}(x)+\frac{1}{2}\Vert x\Vert^2=f(x+x_0)+\frac{1}{2}\Vert x\Vert^2,\\
(\tilde{g})^{\ast}(x)&=(g^{\ast}(x)-\langle x_0,\cdot\rangle)^{\ast}+\frac{1}{2}\Vert x\Vert^2=g(x+x_0)+\frac{1}{2}\Vert x\Vert^2.
\end{aligned}
\end{equation*}
Hence,  $$g(x+x_0)\leq f(x+x_0)-f(x_0)+g(x_0),$$ 
which ends the proof.
\end{proof}

The following proposition provides an example of application of Theorem \ref{teo-comp}.
\begin{proposition} Let $\ell\geq 0$ and $g\in \Gamma_0(\H)$ such that $g^{\ast}$ is bounded from below and 
$$
\Vert x\Vert-\ell  \leq \Vert \operatorname{prox}_g(x)\Vert \quad \textrm{ for all } x\in \H.
$$
Then, $g-g(0)\leq \ell \Vert \cdot \Vert$. Moreover, if $\ell\equiv0$, then $g$ is constant.
\end{proposition}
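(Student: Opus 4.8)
The plan is to apply the comparison principle (Theorem \ref{teo-comp}) at the base point $x_0 = 0$, with a comparison function tailored to the target inequality. Since the desired conclusion is $g - g(0) \le \ell\Vert\cdot\Vert$ and Theorem \ref{teo-comp} produces conclusions of the form $g - g(x_0) \le f - f(x_0)$, the natural choice is $f := \ell\Vert\cdot\Vert$, for which $f(0) = 0$ and $f - f(0) = \ell\Vert\cdot\Vert$. Everything then reduces to verifying the hypotheses of Theorem \ref{teo-comp} for this $f$ at $x_0 = 0$.

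First I would compute the proximal operator of $f = \ell\Vert\cdot\Vert$. A direct one–dimensional minimization along the ray through $x$ (soft-thresholding) gives $\operatorname{prox}_{\ell\Vert\cdot\Vert}(x) = \max\{1 - \ell/\Vert x\Vert,\,0\}\,x$ for $x\neq 0$ and $0$ for $x = 0$, so that $\Vert\operatorname{prox}_f(x)\Vert = \max\{\Vert x\Vert - \ell,\,0\}$ for every $x\in\H$. Combining the standing hypothesis $\Vert x\Vert - \ell \le \Vert\operatorname{prox}_g(x)\Vert$ with the trivial bound $0 \le \Vert\operatorname{prox}_g(x)\Vert$ yields $\Vert\operatorname{prox}_f(x)\Vert = \max\{\Vert x\Vert - \ell,\,0\} \le \Vert\operatorname{prox}_g(x)\Vert$ for all $x$, which is exactly the comparison inequality of Theorem \ref{teo-comp} with $x_0 = 0$.

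Next I would check the remaining requirement $0 \in \operatorname{dom}f \cap \operatorname{dom}g$. For $f$ this is immediate since $f(0) = 0$. For $g$ this is precisely where the assumption that $g^{\ast}$ is bounded from below enters: for $g\in\Gamma_0(\H)$ one has $\inf g^{\ast} = -g^{\ast\ast}(0) = -g(0)$, so boundedness from below of $g^{\ast}$ forces $g(0) < +\infty$, i.e. $0\in\operatorname{dom}g$. With both domain conditions verified, Theorem \ref{teo-comp} gives $g - g(0) \le f - f(0) = \ell\Vert\cdot\Vert$, as claimed. For the degenerate case $\ell = 0$, this reads $g \le g(0)$, so $0$ is a global maximizer of $g$; I would then rule out nonconstancy by a reflection argument: if $g(y) < g(0)$ for some $y$, convexity applied to $0 = \tfrac12\big(y + (-y)\big)$ together with $g(-y) \le g(0)$ gives $g(0) \le \tfrac12\big(g(y) + g(-y)\big) < g(0)$, a contradiction, so $g$ is constant.

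The only genuinely nonroutine step is the identification of the boundedness-below hypothesis on $g^{\ast}$ with the condition $0\in\operatorname{dom}g$ needed to invoke Theorem \ref{teo-comp} at $x_0 = 0$; this hinges on the conjugacy identity $\inf g^{\ast} = -g(0)$. The remaining ingredients—the explicit proximal computation for the norm and the elementary convexity argument in the case $\ell = 0$—are standard and present no real difficulty.
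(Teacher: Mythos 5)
Your proof is correct and follows essentially the same route as the paper: compare $g$ with $f=\ell\Vert\cdot\Vert$ at $x_0=0$ via Theorem \ref{teo-comp}, using the explicit formula $\Vert\operatorname{prox}_{\ell\Vert\cdot\Vert}(x)\Vert=\max\{\Vert x\Vert-\ell,0\}$. You are in fact more explicit than the paper on two points it leaves implicit---that $g^{\ast}$ bounded below yields $0\in\operatorname{dom}g$ through $\inf g^{\ast}=-g(0)$, and the reflection argument replacing the cited fact that a convex function bounded above on $\H$ is constant---but the underlying argument is identical.
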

\begin{proof} Indeed, if  $f=\ell \Vert \cdot\Vert$, then $$\operatorname{prox}_f(x)=\left(1-\frac{\ell}{\max\{ \Vert x\Vert,\ell\}}\right)x,$$ and $f^{\ast}$ is bounded from below with $\inf f^{\ast}=0$. Thus, by Theorem \ref{teo-comp}, $g-g(0)\leq \ell \Vert \cdot \Vert$. Finally, if $\ell=0$, then $g$ is a constant function (a convex function which is bounded from above is constant).
\end{proof}

The following result gives a Lipschitzianity characterization for a convex function.
\begin{proposition}\label{prop-Lipschitz}
Let $f\colon \H \to \mathbb{R}$ be a convex and lower semicontinuous function. Then, $f$ is $\ell$-Lipschitz if and only 
\begin{equation}\label{Lipschitz}
\Vert x\Vert -\ell \leq \Vert \operatorname{prox}_f(x+y)-y\Vert \textrm{ for all } x,y\in \H.
\end{equation}
\end{proposition}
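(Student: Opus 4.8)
The plan is to establish the two implications separately. Both rest on the translation identity $\operatorname{prox}_f(x+y)-y=\operatorname{prox}_{f(\cdot+y)}(x)$ already derived inside the proof of Theorem~\ref{teo-comp}, on the Moreau decomposition (Proposition~\ref{Moreau-deco}), and on the dual description of Lipschitz continuity: a convex function $f\in\Gamma_0(\H)$ is $\ell$-Lipschitz if and only if $\operatorname{dom}f^{\ast}\subseteq\{\xi\in\H:\Vert\xi\Vert\leq\ell\}$.

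For the implication ``$f$ $\ell$-Lipschitz $\Rightarrow$ \eqref{Lipschitz}'', I would first record that Lipschitzianity forces $\operatorname{dom}f^{\ast}\subseteq\{\Vert\xi\Vert\leq\ell\}$: if $\Vert\xi\Vert>\ell$, evaluating $f^{\ast}(\xi)=\sup_{v}\{\langle\xi,v\rangle-f(v)\}$ along $v=t\xi/\Vert\xi\Vert$ and using $f(v)\leq f(0)+\ell\Vert v\Vert$ drives the supremum to $+\infty$. Since the minimizer $\operatorname{prox}_{f^{\ast}}(z)$ defining the Moreau envelope of $f^{\ast}$ must lie in $\operatorname{dom}f^{\ast}$, this gives $\Vert\operatorname{prox}_{f^{\ast}}(z)\Vert\leq\ell$ for every $z$. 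The Moreau decomposition then yields $\operatorname{prox}_f(z)=z-\operatorname{prox}_{f^{\ast}}(z)$, and taking $z=x+y$,
\[
\Vert\operatorname{prox}_f(x+y)-y\Vert=\Vert x-\operatorname{prox}_{f^{\ast}}(x+y)\Vert\geq\Vert x\Vert-\Vert\operatorname{prox}_{f^{\ast}}(x+y)\Vert\geq\Vert x\Vert-\ell,
\]
which is exactly \eqref{Lipschitz}.

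For the converse I would reduce \eqref{Lipschitz} to the preceding proposition via translation. Fix $y\in\H$ and set $g_y:=f(\cdot+y)\in\Gamma_0(\H)$; the translation identity turns \eqref{Lipschitz} into $\Vert x\Vert-\ell\leq\Vert\operatorname{prox}_{g_y}(x)\Vert$ for all $x$, while a one-line computation gives $(g_y)^{\ast}(\xi)=f^{\ast}(\xi)-\langle\xi,y\rangle\geq -f(y)$, so $(g_y)^{\ast}$ is bounded from below. The preceding proposition applied to $g_y$ then gives $g_y-g_y(0)\leq\ell\Vert\cdot\Vert$, that is $f(x+y)-f(y)\leq\ell\Vert x\Vert$ for all $x$. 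As $y$ is arbitrary, replacing $x$ by $u-y$ produces $f(u)-f(y)\leq\ell\Vert u-y\Vert$ for all $u,y\in\H$, i.e.\ $f$ is $\ell$-Lipschitz.

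The only genuinely delicate ingredient is the dual characterization $\operatorname{dom}f^{\ast}\subseteq\{\Vert\xi\Vert\leq\ell\}$ (equivalently, that $\operatorname{prox}_{f^{\ast}}$ never leaves the ball of radius $\ell$); once this is in hand, both directions are just a combination of the Moreau decomposition and the already-proved comparison proposition. I would also note that $f\in\Gamma_0(\H)$ here, since it is finite-valued, convex and lower semicontinuous on all of $\H$, so that $f^{\ast\ast}=f$ and all the above tools apply.
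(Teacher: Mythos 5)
Your proof is correct, and its skeleton is the paper's: the converse direction is exactly the paper's argument, since the paper fixes $y$, sets $h=f(\cdot+y)$ and $g=\ell\Vert\cdot\Vert$, and applies Theorem~\ref{teo-comp} at $x_0=0$ --- which is precisely the content of the unnamed proposition you cite (that proposition is itself this very application of Theorem~\ref{teo-comp}), and your verification that $(g_y)^{\ast}=f^{\ast}-\langle\cdot,y\rangle\geq -f(y)$ matches the paper's remark that $\inf h^{\ast}=-f(y)$. The only genuine deviation is in the easy direction. The paper gets $\Vert x+y-\operatorname{prox}_f(x+y)\Vert\leq\ell$ in one line from the inclusion $x+y-\operatorname{prox}_f(x+y)\in\partial f(\operatorname{prox}_f(x+y))\subseteq\ell\,\mathbb{B}$, using that an $\ell$-Lipschitz convex function has all its subgradients in $\ell\,\mathbb{B}$; you bound the same vector by writing it, via the Moreau decomposition (Proposition~\ref{Moreau-deco}), as $\operatorname{prox}_{f^{\ast}}(x+y)\in\operatorname{dom}f^{\ast}\subseteq\ell\,\mathbb{B}$, proving the domain inclusion from scratch. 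These are dual formulations of one and the same estimate --- indeed $z-\operatorname{prox}_f(z)=\operatorname{prox}_{f^{\ast}}(z)$ always lies in $\partial f(\operatorname{prox}_f(z))\subseteq\operatorname{dom}f^{\ast}$ --- so neither buys extra generality; yours is self-contained where the paper's cites a standard subgradient bound, at the cost of a few extra lines. Incidentally, your route through the earlier proposition also sidesteps a small slip in the paper's inline chain: the display $\Vert\operatorname{prox}_g(x)\Vert\leq\Vert x\Vert-\ell$ fails for $\Vert x\Vert<\ell$ (the left side is $0$), although the inequality actually needed, $\Vert\operatorname{prox}_g(x)\Vert\leq\Vert\operatorname{prox}_h(x)\Vert$, survives because $\Vert\operatorname{prox}_g(x)\Vert=\max\{\Vert x\Vert-\ell,0\}$.
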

\begin{proof}
On the one hand, if $f$ is $\ell$-Lipschitz, then for all $x,y$
$$
\Vert x\Vert -\ell \leq \Vert x+y-\operatorname{prox}_f(x+y)\Vert +\Vert \operatorname{prox}_f(x+y)-y\Vert \leq \ell+\Vert \operatorname{prox}_f(x+y)-y\Vert,
$$
where we have used that $x+y-\operatorname{prox}_f(x+y)\in \partial f(\operatorname{prox}_f(x+y)) \subset \ell\, \mathbb{B}$. \newline
\noindent On the other hand, assume that \eqref{Lipschitz} holds and fix $y\in \H$. Let us consider the functions $h:=f(\cdot+y)$ and $g=\ell \Vert \cdot\Vert$. Then, for all $x\in \H$
$$
\operatorname{prox}_h(x)=\operatorname{prox}_f(x+y)-y \textrm{ and } \operatorname{prox}_g(x)=\left( 1-\frac{\ell}{\max\{ \Vert x\Vert,\ell\}}\right)x.
$$
Moreover, since $\operatorname{dom}(f)=\H$,  $h^{\ast}$ is bounded from below and $\inf h^{\ast}=-f(y)$. 
Therefore, for all $x\in \H$
\begin{equation}
\Vert \operatorname{prox}_g(x)\Vert \leq \Vert x\Vert -\ell \leq \Vert \operatorname{prox}_f(x+y)-y\Vert =\Vert \operatorname{prox}_h(x)\Vert.
\end{equation}
By virtue of Theorem \ref{teo-comp}, we obtain that 
$$
f(x+y)\leq \ell \Vert x\Vert +\inf g^{\ast}-\inf h^{\ast}.
$$
Finally, since $\inf g^{\ast}=0$ and $\inf h^{\ast}=-f(y)$, we get that 
$$
f(x+y)\leq f(y)+\ell \Vert x\Vert,
$$
which implies that $f$ is $\ell$-Lipschitz.

\end{proof}

\section{Determination of convex functions}\label{Determination}

Since then, several integration results appeared In this section, we present the main finding of the paper; that is, the norm of the proximal operator determines a convex function up to a constant. The following two results extends Theorem \ref{Moreau}. 

\begin{proposition}\label{main}
Let $f,g\colon \Gamma_0(\H)$ be two functions such that for some $x_0\in \operatorname{dom}f\cap \operatorname{dom}g$ 
$$
\Vert \operatorname{prox}_f(x)-x_0\Vert= \Vert \operatorname{prox}_g(x)-x_0\Vert \quad \textrm{ for all } x\in \H.
$$
Then, $f-f(x_0)=g-g(x_0)$.
\end{proposition}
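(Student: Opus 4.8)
The plan is to obtain the equality $f-f(x_0)=g-g(x_0)$ by applying the comparison principle of Theorem \ref{teo-comp} twice, exploiting the symmetry of its hypothesis under interchange of $f$ and $g$. The hypothesis here is an equality of proximal norms, which I read as the conjunction of the two one-sided inequalities that Theorem \ref{teo-comp} is designed to consume, one in each direction.

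First I would read the equality $\Vert \operatorname{prox}_f(x)-x_0\Vert=\Vert \operatorname{prox}_g(x)-x_0\Vert$ as the inequality $\Vert \operatorname{prox}_f(x)-x_0\Vert\leq \Vert \operatorname{prox}_g(x)-x_0\Vert$ for all $x\in\H$. Since $x_0\in \operatorname{dom}f\cap \operatorname{dom}g$, Theorem \ref{teo-comp} applies directly and yields $g-g(x_0)\leq f-f(x_0)$.

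Next I would reverse the reading, using the same equality as $\Vert \operatorname{prox}_g(x)-x_0\Vert\leq \Vert \operatorname{prox}_f(x)-x_0\Vert$ for all $x\in\H$, and invoke Theorem \ref{teo-comp} again with the roles of $f$ and $g$ interchanged. The domain condition $x_0\in\operatorname{dom}f\cap\operatorname{dom}g$ is symmetric in $f$ and $g$, so no extra hypothesis is required, and this application produces the reverse inequality $f-f(x_0)\leq g-g(x_0)$.

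Combining the two inequalities gives $f-f(x_0)=g-g(x_0)$ pointwise on $\H$, which is the claim. There is essentially no hard step: the only thing to check is that both applications of Theorem \ref{teo-comp} are legitimate, which reduces to the symmetry of the domain condition just noted. The sole conceptual point, such as it is, is recognizing that an equality of proximal norms decomposes into the two inequalities feeding the comparison principle in opposite directions.
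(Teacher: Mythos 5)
Your proof is correct and is exactly the intended argument: the paper gives no separate proof of Proposition \ref{main}, treating it as the immediate consequence of applying Theorem \ref{teo-comp} twice, once in each direction, just as you do. Nothing is missing; the symmetry of the domain hypothesis makes both applications legitimate.
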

The following result summarizes several determination principles for convex functions. 
\begin{Theorem}\label{teo2}
Let $f,g\colon \Gamma_0(\H)$ be two functions such that $f^{\ast}$ and $g^{\ast}$ are bounded from below. Then,  the following assertions are equivalent:

\begin{itemize}
\item[(i)] For all $x\in \H$, $\Vert \operatorname{prox}_f(x)\Vert= \Vert \operatorname{prox}_g(x)\Vert$.
\item[(ii)] For all $x\in \H$, $f(x)=g(x)+ \inf f^{\ast}-\inf g^{\ast}$.
\item[(iii)] For all $x\in \H$, $\partial f(x)^{\circ}=\partial g(x)^{\circ}$, where $\partial f(x)^{\circ}=\operatorname{Proj}_{\partial f(x)}(0)$ and $\partial g(x)^{\circ}=\operatorname{Proj}_{\partial g(x)}(0)$.
\item[(iv)] For all $x\in \H$, $\partial f(x)=\partial g(x)$.
\item[(v)] For all $x\in \H$, $ \operatorname{prox}_f(x)= \operatorname{prox}_g(x)$.
\end{itemize}
\end{Theorem}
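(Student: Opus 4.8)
The plan is to prove the five statements equivalent by a single cycle $(i)\Rightarrow(ii)\Rightarrow(iii)\Rightarrow(iv)\Rightarrow(v)\Rightarrow(i)$, in which every step but one is routine, the only substantial implication being $(iii)\Rightarrow(iv)$. First I would record what the standing hypothesis buys us: since $f\in\Gamma_0(\H)$ one has $\inf f^{\ast}=-f^{\ast\ast}(0)=-f(0)$, so ``$f^{\ast}$ bounded from below'' is the same as $0\in\operatorname{dom}f$, with $\inf f^{\ast}=-f(0)$, and likewise $0\in\operatorname{dom}g$ with $\inf g^{\ast}=-g(0)$. This is exactly what is needed to take $x_0=0$ in Proposition \ref{main} and Theorem \ref{teo-comp}.

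For $(i)\Rightarrow(ii)$ I would apply Proposition \ref{main} with $x_0=0$: condition $(i)$ reads $\|\operatorname{prox}_f(x)-0\|=\|\operatorname{prox}_g(x)-0\|$, hence it yields $f-f(0)=g-g(0)$, and substituting $f(0)=-\inf f^{\ast}$ and $g(0)=-\inf g^{\ast}$ rewrites the additive constant into the form displayed in $(ii)$. The step $(ii)\Rightarrow(iii)$ is immediate, since two functions differing by a constant have the same subdifferential at every point and therefore the same projection of $0$ onto it. For $(iv)\Rightarrow(v)$ I would use that the proximal operator is the resolvent of the subdifferential, $p=\operatorname{prox}_f(x)\iff x-p\in\partial f(p)$, i.e. $\operatorname{prox}_f=(\operatorname{Id}+\partial f)^{-1}$, so $\partial f=\partial g$ forces $\operatorname{prox}_f=\operatorname{prox}_g$; and $(v)\Rightarrow(i)$ is trivial, obtained by taking norms.

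The hard part will be $(iii)\Rightarrow(iv)$, because the minimal-norm selection $x\mapsto(\partial f(x))^{\circ}$ a priori carries strictly less information than the full set-valued map $\partial f$: knowing the projection of the single point $0$ onto $\partial f(x)$ is weaker than knowing $\partial f(x)$ itself. My plan is to pass to norms, so that $(iii)$ yields $\operatorname{dist}(0,\partial f(x))=\|(\partial f(x))^{\circ}\|=\|(\partial g(x))^{\circ}\|=\operatorname{dist}(0,\partial g(x))$ for every $x$, and then to invoke the determination of a convex function up to a constant by the norm of its minimal subgradient, namely the subgradient counterpart of Proposition \ref{Gradiente} that underlies Theorem \ref{teo-comp}, applied in both directions; this would give that $f$ and $g$ differ by a constant, whence $\partial f=\partial g$, i.e. $(iv)$. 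The delicate point to be checked is that $f$ and $g$ need not themselves be bounded from below (only their conjugates are, by hypothesis, as recorded above), so the comparison principle must be set up on the conjugate side or after a reduction that restores boundedness; reconciling this with the fact that the vector equality in $(iii)$ is genuinely stronger than the norm equality the principle actually consumes is where the argument needs the most care.
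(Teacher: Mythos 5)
Your cycle $(i)\Rightarrow(ii)\Rightarrow(iii)\Rightarrow(iv)\Rightarrow(v)\Rightarrow(i)$ is exactly the paper's, and four of its five steps coincide with the paper's proof: the observation that ``$f^{\ast}$ bounded from below'' amounts to $0\in\operatorname{dom}f$ with $\inf f^{\ast}=-f(0)$ is correct and is what licenses taking $x_0=0$; $(i)\Rightarrow(ii)$ via Proposition \ref{main} (itself Theorem \ref{teo-comp} applied in both directions) is the paper's route; and $(ii)\Rightarrow(iii)$, $(iv)\Rightarrow(v)$ via $\operatorname{prox}_f=(I+\partial f)^{-1}$, and $(v)\Rightarrow(i)$ are verbatim the paper's. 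One small remark: your substitution actually yields $f=g+f(0)-g(0)=g+\inf g^{\ast}-\inf f^{\ast}$ (since $f=g+c$ forces $f^{\ast}=g^{\ast}-c$), which has the \emph{opposite} sign to (ii) as printed; the slip is in the paper's statement, not in your computation, and it is immaterial to the equivalences, but your claim that the constant ``rewrites into the form displayed in (ii)'' is not quite accurate.

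The genuine gap is $(iii)\Rightarrow(iv)$, and as planned it is fatal, not merely delicate. Retaining only $\operatorname{dist}(0,\partial f(x))=\operatorname{dist}(0,\partial g(x))$ discards exactly the information the implication needs, and no norm-based comparison principle can recover it under the stated hypotheses: take $f=\langle a,\cdot\rangle$ and $g=\langle b,\cdot\rangle$ with $\Vert a\Vert=\Vert b\Vert$ and $a\neq b$. Then $f^{\ast}=\delta_{\{a\}}$ and $g^{\ast}=\delta_{\{b\}}$ are bounded from below, and $\Vert\partial f(x)^{\circ}\Vert=\Vert a\Vert=\Vert b\Vert=\Vert\partial g(x)^{\circ}\Vert$ for every $x\in\H$, yet $\partial f\neq\partial g$ and $f-g$ is nonconstant (this is the dual of the paper's own sharpness example following the theorem). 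So the worry you flagged --- that the vector equality in (iii) is strictly stronger than its norm shadow --- is the crux, not a technicality: the implication is \emph{false} at the level of norms, hence the hoped-for ``reduction restoring boundedness from below'' cannot exist; note also that minimal sections do not dualize ($\partial f^{\ast}=(\partial f)^{-1}$ exchanges domain and range, so knowing $\partial f(\cdot)^{\circ}$ gives no handle on $\partial f^{\ast}(\cdot)^{\circ}$), which blocks the conjugate-side variant as well. The paper's proof instead consumes the full vector equality: $\partial f$ and $\partial g$ are maximal monotone operators, and a maximal monotone operator is determined by its minimal selection $A^{\circ}$ (Br\'ezis \cite[Corollaire~2.2]{Brezis1973}, provable via the semigroup generated by $-\partial f$, whose trajectories obey $\dot u(t)=-\partial f(u(t))^{\circ}$); applied to (iii), this gives $\partial f=\partial g$ directly. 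Replacing your norm reduction by this operator-theoretic step repairs the proof.
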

\begin{proof}
(i)$\Rightarrow$(ii) follows from Theorem \ref{teo-comp}.  (ii)$\Rightarrow$(iii) is trivial. 
(iii)$\Rightarrow$ (iv) follows from \cite[Corollaire~2.2]{Brezis1973}. (iv)$\Rightarrow$(v) follows from the formula $\operatorname{prox}_f(\cdot)=(I+\partial f)^{-1}(\cdot)$. Finally, (v)$\Rightarrow$(i) is trivial.
\end{proof}
The following example shows that the hypotheses for  the implication $(i)\Rightarrow(ii)$ are sharp.
\begin{example}
Let us consider $f=\delta_{\{x_1\}}$ and $g=\delta_{\{x_2\}}$, where $x_1\neq x_2$ and $\Vert x_1\Vert=\Vert x_2\Vert$. Then, for all $x\in \H$
$$\Vert \operatorname{prox}_f(x)\Vert =\Vert x_1\Vert =\Vert x_2\Vert =\Vert \operatorname{prox}_g(x)\Vert.$$
However, $f^{\ast}=\langle x_1,\cdot\rangle$ and $g^{\ast}=\langle x_2,\cdot\rangle$ are not bounded from below.
\end{example}

Theorem \ref{teo2}  allow us to obtain the following characterization of support functions.
\begin{corollary}
Let $C$ be a nonempty, closed and convex set containing $0$.  Then, $f\in \Gamma_0(\H)$ satisfies 
\begin{equation}\label{distance}
\Vert \operatorname{prox}_f(x)\Vert =d_{C}(x) \textrm{ for all } x\in \H
\end{equation}
if and only if $f$ is the support of $C$ up to a constant. 
\end{corollary}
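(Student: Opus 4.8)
The plan is to deduce the corollary from Theorem \ref{teo2} applied with the comparison function $g=\sigma_C$. The first step is to identify the proximal operator of $\sigma_C$. Since $C$ is nonempty, closed and convex, $\delta_C\in\Gamma_0(\H)$ and $(\sigma_C)^{\ast}=(\delta_C)^{\ast\ast}=\delta_C$, while $\operatorname{prox}_{\delta_C}=\operatorname{Proj}_C$. Hence Moreau's decomposition (Proposition \ref{Moreau-deco}) applied to $\sigma_C$ gives
$$
\operatorname{prox}_{\sigma_C}(x)=x-\operatorname{prox}_{(\sigma_C)^{\ast}}(x)=x-\operatorname{Proj}_C(x),
$$
so that $\Vert\operatorname{prox}_{\sigma_C}(x)\Vert=\Vert x-\operatorname{Proj}_C(x)\Vert=d_C(x)$ for all $x\in\H$. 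This simultaneously settles the ``if'' direction: if $f=\sigma_C+c$ for a constant $c$, then $\operatorname{prox}_f=\operatorname{prox}_{\sigma_C}$ (an additive constant does not affect the proximal mapping), whence $\Vert\operatorname{prox}_f(x)\Vert=d_C(x)$.

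For the converse I would verify the hypotheses of Theorem \ref{teo2} for the pair $f,\,g=\sigma_C$ and then invoke the implication (i)$\Rightarrow$(ii). On the $g$ side this is immediate: $g^{\ast}=\delta_C$ is bounded from below, with $\inf g^{\ast}=0$ since $C\neq\emptyset$. The point that requires the hypothesis $0\in C$ is the boundedness from below of $f^{\ast}$. Because $0\in C$ we have $d_C(0)=0$, so the assumption \eqref{distance} forces $\operatorname{prox}_f(0)=0$; applying Moreau's decomposition at $x=0$ yields $\operatorname{prox}_{f^{\ast}}(0)=0$, which is exactly the optimality condition $0\in\partial f^{\ast}(0)$. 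Consequently $f^{\ast}(y)\geq f^{\ast}(0)$ for every $y\in\H$, i.e. $f^{\ast}$ is bounded from below and $\inf f^{\ast}=f^{\ast}(0)$.

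With both conjugates bounded from below and $\Vert\operatorname{prox}_f(x)\Vert=d_C(x)=\Vert\operatorname{prox}_{\sigma_C}(x)\Vert$, condition (i) of Theorem \ref{teo2} holds, so (ii) delivers
$$
f(x)=\sigma_C(x)+\inf f^{\ast}-\inf g^{\ast}=\sigma_C(x)+f^{\ast}(0)\quad\textrm{for all }x\in\H,
$$
that is, $f$ coincides with $\sigma_C$ up to the additive constant $f^{\ast}(0)$. The only genuinely delicate point is securing the boundedness of $f^{\ast}$; once it is in hand the result is a direct specialization of Theorem \ref{teo2}, and the role of the hypothesis $0\in C$ is precisely to guarantee this (and, incidentally, to normalize $\inf\sigma_C=0$).
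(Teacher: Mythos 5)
Your proof is correct and follows essentially the same route as the paper: identify $\Vert\operatorname{prox}_{\sigma_C}(x)\Vert=d_C(x)$ via Moreau's decomposition and then apply Theorem \ref{teo2} with $g=\sigma_C$, noting that additive constants leave the proximal mapping unchanged for the ``if'' direction. In fact you supply a detail the paper merely asserts, namely that \eqref{distance} forces $f^{\ast}$ to be bounded from below: your argument via $0\in C$, hence $\operatorname{prox}_f(0)=0$, hence $\operatorname{prox}_{f^{\ast}}(0)=0$ and the optimality condition $0\in\partial f^{\ast}(0)$, is a valid and welcome justification of that step.
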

\begin{proof}
Indeed, on the one hand, if $f$ is the support of $C$ up to a constant, then $\operatorname{prox}_f=\operatorname{prox}_{\sigma_C}$, which implies \eqref{distance}. On the other hand, if \eqref{distance} holds, then $f^{\ast}$ is bounded from below. Moreover, by Proposition \ref{Moreau-deco},
$$
\Vert \operatorname{prox}_f(x)\Vert=d_{C}(x)=\Vert x-\operatorname{proj}_C(x)\Vert =\Vert \operatorname{prox}_{\sigma_{C}}(x)\Vert \textrm{ for all } x\in \H,
$$
where we have used that $(\delta_{C})^{\ast}=\sigma_{C}$. Therefore, by Theorem \ref{teo2}, $f$ is the support of $C$ up to a constant.
\end{proof}

\paragraph{Acknowledgements} The author wishes to express his gratitude to Bao Tran Nguyen,  Pedro P\'erez-Aros and David Salas  from Universidad de OÕHiggins and Lionel Thibault  from University of Montpellier for their valuable comments about the presentation of the article. The author was funded by ANID Chile under grants Fondecyt de Iniciaci\'on No. 11180098 and Fondecyt Regular No. 1200283.

\end{document}